\documentclass{amsart}
\usepackage[latin1]{inputenc}
\usepackage{color}
\usepackage{url}
\usepackage{amssymb}
\usepackage[all]{xy}
\vfuzz2pt 
\hfuzz2pt
\newtheorem{thm}{Theorem}
\newtheorem{cor}[thm]{Corollary}
\newtheorem{lem}[thm]{Lemma}
\newtheorem{prop}[thm]{Proposition}
\theoremstyle{definition}

\theoremstyle{remark}
\newtheorem{rem}[thm]{Remark}
\newtheorem{prob}{Problem}

\numberwithin{equation}{section}

\newcommand{\To}{\longrightarrow}

\begin{document}
\setcounter{tocdepth}{1}

\title[]{The free Banach lattices generated by $\ell_p$ and $c_0$}

\author[A. Avil\'es]{Antonio Avil\'es}
\address{Universidad de Murcia, Departamento de Matem\'{a}ticas, Campus de Espinardo 30100 Murcia, Spain.}
\email{avileslo@um.es}
\author[P. Tradacete]{Pedro Tradacete}
\address{Mathematics Department, Universidad Carlos III de Madrid, E-28911 Legan\'es, Madrid, Spain.}
\email{ptradace@math.uc3m.es}

\author[I. Villanueva]{Ignacio Villanueva}
\address{Universidad Complutense de Madrid, Departamento de Análisis Matemático y Matemática Aplicada, Plaza de Ciencias 3, 28040 Madrid, Spain.}
\email{ignaciov@ucm.es}
\subjclass[2010]{46B42, 46B25}

\keywords{Banach lattice; free lattice; weakly compactly generated space}

\thanks{First author was supported by projects MTM2014-54182-P and MTM2017-86182-P (MINECO,AEI/FEDER,UE) and by 19275/PI/14 (Fundaci\'on S\'eneca). Second author was supported by the MINECO Grants MTM2016-75196-P and MTM2016-76808-P. Third author was supported by MINECO and QUITEMAD+-CM (MTM2014-54240-P), and Comunidad de Madrid (S2013/ICE- 2801)}

\begin{abstract}
We prove that, when $2<p<\infty$, in the free Banach lattice generated by $\ell_p$ (respectively by $c_0$), the absolute values of the canonical basis form an $\ell_r$-sequence, where $\frac{1}{r} = \frac{1}{2} + \frac{1}{p}$ (respectively an $\ell_2$-sequence). In particular, in any Banach lattice, the absolute values of any $\ell_p$ sequence always have an upper $\ell_r$-estimate. Quite surprisingly, this implies that the free Banach lattices generated by the nonseparable $\ell_p(\Gamma)$ for $2<p<\infty$, as well as $c_0(\Gamma)$, are weakly compactly generated whereas this is not the case for $1\leq p\leq 2$. 
\end{abstract}

\maketitle

\section{Introduction}

Free objects play a fundamental role in many areas of mathematics, providing a source of distinguished and natural examples. In this work, we will explore the properties of free Banach lattices generated by the classical $\ell_p$ and $c_0$ spaces.

Recently, the notion of free Banach lattice generated by a Banach space has been introduced \cite{ART}. Namely, given a Banach space $E$, the free Banach lattice generated by $E$, is a Banach lattice $FBL[E]$ such that:
\begin{enumerate}
\item There is a linear isometry $\phi_E$ of~$E$ into~$FBL[E]$, and we write $\phi_E(x) = \delta_x$.
\item For every Banach lattice $X$ and every operator $T:E\rightarrow X$ there exists a unique lattice homomorphism
$\hat T:FBL[E]\rightarrow X$ such that $\|\hat T\|=\|T\|$ and $\hat T\circ \phi_E=T$, i.e. the following diagram commutes:
$$
	\xymatrix{E\ar_{\phi_E}[d]\ar[rr]^T&&X\\
	FBL[E]\ar_{\hat{T}}[urr]&& }
$$
\end{enumerate}
This construction generalizes the notion of free Banach lattice over a set of generators, introduced in \cite{dePW}, and provides a new tool for understanding better the relation between Banach spaces and Banach lattices.

Historically, free vector lattices have been considered much earlier in the literature \cite{Baker, Bleier}. Let us recall that given a set $A$, the free vector lattice over $A$, $FVL(A)$, is a vector lattice containing a set of lattice free generators idempotent with $A$; in other words, there is a set of generators of the size of $A$ in $FVL(A)$ which have no prescribed lattice relation among themselves and $FVL(A)$ is spanned by these generators by means of the lattice operations (see also \cite[Section 3]{dePW}). Recently, B. de Pagter and A. Wickstead in \cite{dePW} introduced the free Banach lattice generated by a set. Essentially, in order to construct free Banach lattices over a set of generators (and in particular, to show they do exist), one needs to find what is the largest possible lattice norm that a free vector lattice can carry. This has been done in \cite[Theorem 4.7]{dePW} and more generally in \cite[Theorem 2.5]{ART}.

To motivate our goal in this paper, let us recall the following fact given in \cite[Theorem 5.4]{ART}: for $p\in [1,2]$, if $(e_n)_{n\in\mathbb N}$ denotes the canonical basis of $\ell_p$, then there is a constant $C>0$ such that for every scalars $(\lambda_i)_{i=1}^m$
\begin{equation}\label{p<2}
C \sum_{i=1}^m |\lambda_i| \leq \Big\|\sum_{i=1}^m \lambda_i |\delta_{e_i}|\Big\|_{FBL[\ell_p]}\leq  \sum_{i=1}^m |\lambda_i|.
\end{equation}
Thus, we have a sequence of vectors $(\delta_{e_i})_{i\in\mathbb N}$ which are equivalent to the $\ell_p$ basis in $FBL[\ell_p]$, while the sequence of their moduli $(|\delta_{e_i}|)_{i\in\mathbb N}$ are equivalent to the $\ell_1$ basis (and in particular not weakly null). This motivates the following fundamental question: how different can be the Banach space structure of a certain basic sequence $(x_i)_{i\in\mathbb N}$ in a Banach lattice from that of its moduli $(|x_i|)_{i\in\mathbb N}$? 

This question is clearly connected to understanding the rigidity that the lattice structure imposes on the isomorphic embeddings of certain Banach spaces into Banach lattices. Consider for example the case of $c_0$: let $(x_n)_{n\in\mathbb N}$ denote a sequence equivalent to the unit vector basis of $c_0$ in a Banach lattice; what can be said about the sequence $(|x_n|)_{n\in\mathbb{N}}$? In this respect, recall the well-known fact that a Banach lattice contains a subspace isomorphic to $c_0$ if and only if it contains a sublattice isomorphic to $c_0$ (cf. \cite[Theorem 4.60]{AB}). However, this result does not give any relevant information about the relation between the linear copy of $c_0$ and the corresponding lattice copy. In fact, since the lattice operations on a Banach lattice are only occasionally weakly continuous, one does not a priori even know whether the sequence $(|x_n|)_{n\in \mathbb N}$ as above is also weakly null. We will later show that this is indeed the case (see Remark \ref{c_0}). 

Moreover, a direct application of \eqref{p<2} yields that, for any set $\Gamma$, if $p\in [1,2]$, then $FBL[\ell_p(\Gamma)]$ contains a subspace isomorphic to $\ell_1(\Gamma)$, and thus, when $\Gamma $ is uncountable, this space is not weakly compactly generated (in fact, it does not even embed as a subspace of a weakly compactly generated space). This provides the first known example of a Banach lattice which is weakly compactly generated as a Banach lattice but not as a Banach space, answering in the negative a question of J. Diestel (we refer to \cite{AGLRT, ART} for more details on this question).

In this paper, we complete this direction of research showing that, surprisingly, the situation when $p>2$ is quite the opposite. First, we will see that if $(e_n)_{n\in\mathbb N}$ is the canonical basis of $\ell_p$, then for $p>2$ the sequence $(|\delta_{e_n}|)_{n\in\mathbb N}$ in $FBL[\ell_p]$ spans a subspace isomorphic to $\ell_r$ with $\frac1r=\frac12+\frac1p$ (Theorem \ref{p:p>2}). Similarly, in $FBL[c_0]$, the sequence $(|\delta_{e_n}|)_{n\in\mathbb N}$ can be seen to be equivalent to the $\ell_2$ basis. These facts will later be used to show that, unlike when $p\leq2$, the spaces $FBL[\ell_p(\Gamma)]$ for $p>2$ and $FBL[c_0(\Gamma)]$ are weakly compactly generated, independently of the size of $\Gamma$ (Corollary \ref{wcg}). 

By the universal character of free Banach lattices, there is a counterpart of these results for general Banach lattices. If $(x_n)_{n\in\mathbb{N}}$ is a sequence of vectors in a Banach lattice that is equivalent to the canonical basis of $\ell_p$ for $p>2$ (respectivel of $c_0)$, then the sequence of absolute values $(|x_n|)_{n\in\mathbb{N}}$ has an upper $\ell_r$-estimate (respectively an upper $\ell_2$-estimate). And no better estimates can be expected. Also, if a subspace $Z$ of a Banach lattice is isomorphic, as a Banach space to $c_0(\Gamma)$ or $\ell_p(\Gamma)$ for $p>2$, then the Banach lattice generated by $Z$ is a weakly compactly generated Banach space.

We refer the reader to the monographs \cite{AB, LT2, MN} for unexplained terminology and background on Banach lattice theory.

\section{Preliminary results}

The existence of $FBL[E]$ is not evident per se. Let us begin with some observations to motivate the explicit expression for the norm of $FBL[E]$ given in \cite{ART}, which in particular shows its existence.  

Let $H[E]\subset\mathbb R^{\mathbb R^E}$ be the linear space of all positively homogeneous functions $f:E^\ast \to \mathbb{R}$. The free vector lattice $FVL(E)$ can be identified with the vector sublattice of $H[E]$ generated by the evaluations $\{\delta_x: x\in E\}$, where $\delta_x(f)=f(x)$ for every $f\in \mathbb R^E$. The norm of $FBL[E]$ must be the largest possible lattice norm that we can define on this space. In particular, given arbitrary $(x_k^*)_{k=1}^n\subset E^*$ we can define an operator $T: E \rightarrow \ell_1^n$ by the expression 
$$
T(x)=(x_k^*(x))_{k=1}^n
$$
for $x\in E$. It is easy to check that the lattice homomorphism $\hat{T}:FBL[E]\rightarrow \ell_1^n$ extending $T$, is necessarily given by
$$
T(f)=(f(x_k^*))_{k=1}^n
$$
for $f\in FVL(E)$. Hence, the norm of $FBL[E]$, provided it exists, for $f\in FVL(E)$, must satify the inequality
$$
\|\hat T f\|_{\ell_1^n}\leq \|T\| \|f\|_{FBL[E]}.
$$
Therefore, we have
$$
\|f\|_{FBL[E]}\geq \frac{\|\hat T f\|_{\ell_1^n}}{\|T\|}=\frac{\sum_{k=1}^n |f(x_k^*)|}{\sup_{x\in B_E}\sum_{k=1}^n|x_k^*(x)|}.
$$

This motivates an explicit expression for the norm of $FBL[E]$ as follows: for any $f\in H[E]$, define the norm
$$
	\|f\|_{FBL[E]} :=
	\sup\left\{\sum_{k=1}^n |f(x_k^\ast)| : \, n\in\mathbb N, \, x_1^*,\dots,x_n^*\in E^*, \,  \sup_{x\in B_E} \sum_{k=1}^n |x_k^\ast(x)|\leq 1\right\}.
$$
It was shown in \cite[Theorem 2.4]{ART} that the Banach lattice $FBL[E]$ coincides with the closed (with respect to the above norm) sublattice of $H[E]$ generated by $\{\delta_x:x\in E\}$, and the linear isometry $\phi_E:E\rightarrow FBL[E]$ is given by $\phi_E(x)=\delta_x$.
\medskip

The problem that we will study for the canonical basic sequences of $\ell_p$ and $c_0$ could as well be considered for more general sequences of vectors. Let us make some general remarks and point out the problem.

\begin{prop} Let $(e_n)_{n\in\mathbb N}$ be a sequence of vectors in a Banach space $E$, and let $(|\delta_{e_n}|)_{n\in\mathbb N}$  be the sequence of its absolute values in $FBL[E]$.
	\begin{enumerate}
		\item If $(e_n)_{n\in\mathbb N}$ is a basic sequence, then so is $(|\delta_{e_n}|)_{n\in\mathbb N}$.
		\item If $(e_n)_{n\in\mathbb N}$ is unconditional, then so is $(|\delta_{e_n}|)_{n\in\mathbb N}$.
		\item If $(e_n)_{n\in\mathbb N}$ is symmetric, then so is $(|\delta_{e_n}|)_{n\in\mathbb N}$, and therefore $(|\delta_{e_n}|)_{n\in\mathbb N}$ is either an $\ell_1$-sequence or it is weakly null.
	\end{enumerate}
\end{prop}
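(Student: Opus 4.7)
The plan is to combine the explicit norm formula
$$\|f\|_{FBL[E]} = \sup\left\{\sum_{k=1}^n |f(x_k^\ast)|:\ (x_k^\ast)_{k=1}^n \subset E^\ast,\ \sup_{x \in B_E} \sum_{k=1}^n |x_k^\ast(x)| \leq 1\right\}$$
with the following extension principle: for any closed subspace $Y \subset E$ and finite family $(y_k^\ast)_{k=1}^n \subset Y^\ast$ satisfying $\sup_{y \in B_Y}\sum_k|y_k^\ast(y)|\leq C$, there exist $(\tilde y_k^\ast)_k \subset E^\ast$ with $\tilde y_k^\ast|_Y = y_k^\ast$ and $\sup_{x \in B_E}\sum_k|\tilde y_k^\ast(x)|\leq C$. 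Equivalently, the associated operator $Y \to \ell_1^n = (\ell_\infty^n)^\ast$ extends to $E \to \ell_1^n$ with the same norm; this is a standard consequence of isometric extension of operators into dual Banach spaces.

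For (1), put $Y := \overline{\mathrm{span}}(e_n)$, let $K$ be the basis constant of $(e_n)$, and let $Q_N : Y \to Y$ denote the natural basis projections, $\|Q_N\|\leq K$. Starting from an almost-optimal witness $(y_k^\ast)_k \subset E^\ast$ (with the constraint $\leq 1$) for $\|g\|_{FBL[E]}$, where $g := \sum_{i=1}^N a_i|\delta_{e_i}|$, form $(y_k^\ast|_Y \circ Q_N)_k \subset Y^\ast$, whose $\ell_1$-sum constraint over $B_Y$ is at most $K$, and extend via the principle to $(\tilde y_k^\ast)_k \subset E^\ast$ with constraint $\leq K$. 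By construction $\tilde y_k^\ast(e_i) = y_k^\ast(e_i)$ for $i \leq N$ and $\tilde y_k^\ast(e_i) = 0$ for $i > N$, so $f(\tilde y_k^\ast) = g(y_k^\ast)$ for $f := \sum_{i=1}^M a_i |\delta_{e_i}|$. Rescaling by $K$ exhibits $(\tilde y_k^\ast/K)_k$ as a valid witness for $\|f\|_{FBL[E]}$, yielding $\|g\|\leq K\|f\|$. Linear independence of $(|\delta_{e_n}|)$ is immediate by pairing with Hahn--Banach extensions of biorthogonal functionals.

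Parts (2) and (3) follow the same template with different operators on $Y$. For (2), given signs $\eta_n\in\{\pm 1\}$, apply the template to the subset projections $P_\pm : Y\to Y$ onto $\overline{\mathrm{span}}(e_n : \eta_n=\pm 1)$, of norm $\leq C$ (unconditional constant); combining the two resulting witnesses in $E^\ast$ doubles the $\ell_1$-sum constraint to $2C$, and the triangle inequality $|A_k - B_k|\leq|A_k|+|B_k|$ applied to the $\eta_n = \pm 1$ contributions $A_k,B_k$ yields unconditionality of $(|\delta_{e_n}|)$ with constant $2C$. For (3), symmetry combines (2) with permutation invariance, the latter obtained by running the template on the symmetry operator $M_\pi : Y \to Y$, $e_n \mapsto e_{\pi(n)}$, of norm $\leq C$. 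The final dichotomy ``$\ell_1$-sequence or weakly null'' is a classical consequence of symmetry: if $(|\delta_{e_n}|)$ were not weakly null, some functional would detect a uniform lower bound, and combined with the uniform upper bound $\||\delta_{e_n}|\| = \|e_n\|$ and unconditionality this forces equivalence to the $\ell_1$-basis (cf.\ \cite{LT2}).

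The main obstacle is to secure the extension principle. Viewing $(y_k^\ast)_k$ as an operator into the dual space $\ell_1^n = (\ell_\infty^n)^\ast$, isometric extension to all of $E$ follows from Hahn--Banach applied in the projective tensor product, via the duality $L(Y, V^\ast) \cong (Y \widehat{\otimes}_\pi V)^\ast$. Once this principle is in hand, everything else is bookkeeping with the explicit formula.
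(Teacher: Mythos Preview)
Your proof rests on an ``extension principle'' that is false as stated, and the justification you give for it does not work. You claim that any operator $T:Y\to \ell_1^n$ extends to $\tilde T:E\to \ell_1^n$ with the same norm, and you argue via the duality $L(Y,V^*)\cong (Y\mathbin{\widehat\otimes}_\pi V)^*$ together with Hahn--Banach. The gap is that Hahn--Banach requires $Y\mathbin{\widehat\otimes}_\pi V$ to sit \emph{isometrically} inside $E\mathbin{\widehat\otimes}_\pi V$, and the projective tensor product does not respect subspaces isometrically in general (it is the injective tensor product that does). Equivalently, your principle says exactly that $\ell_1^n$ is $1$-injective, which fails already for $n=3$: the $1$-injective finite-dimensional spaces are precisely the $\ell_\infty^m$ spaces, and $\ell_1^3$ is not isometric to any of them. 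So the step ``extend via the principle to $(\tilde y_k^*)_k\subset E^*$ with constraint $\le K$'' is unjustified, and with it the rest of the argument collapses. (Your principle is in fact equivalent to the statement that $FBL[Y]\hookrightarrow FBL[E]$ is an isometric lattice embedding for every closed subspace $Y\subset E$; this is a much more delicate matter and certainly not a one-line consequence of Hahn--Banach.)

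The paper avoids this difficulty entirely by working with the universal property rather than the explicit norm formula. After reducing (without loss of generality) to $E=\overline{\mathrm{span}}\{e_n\}$, the basis projections $T_A:E\to E$ lift via the universal property to Banach lattice homomorphisms $\hat T_A:FBL[E]\to FBL[E]$ with $\|\hat T_A\|=\|T_A\|$, and since lattice homomorphisms commute with absolute values one gets $\hat T_A(|\delta_{e_n}|)=|\delta_{e_n}|$ or $0$ directly. The same device handles unconditionality and symmetry by lifting sign-change and permutation operators, and the final dichotomy is obtained from Rosenthal's $\ell_1$-theorem. This route never needs to extend functionals from a subspace, which is exactly where your argument breaks.
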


\begin{proof}
	Suppose without loss of generality that $E =\overline{span}\{e_n\}$ and $A\subset\mathbb{N}$ and we have a projection $T_A:E\To E$ such that $T_A(e_n) = e_n$ if $n\in A$ and $T_A(e_n) = 0$ if $n\not\in A$. Then, we can extend this projection using the universal property of the free Banach lattice to a Banach lattice homomorphism $\hat{T}_A:FBL[E]\To FBL[E]$ that satisfies $\hat{T}_A(|\delta_{e_n}|) = |\delta_{e_n}|$ if $n\in A$ and $\hat{T}_A(|\delta_{e_n}|) = 0$ if $n\not\in A$. The use of this kind of operators gives statements (1) and (2). For (3), if the basis is symmetric, for each permutation $\sigma$ of the natural numbers we have an isomorphism $T_\sigma:E\To E$ such that $Te_n = e_{\sigma(n)}$. Extending this operator to a lattice homomorphism in $FBL[E]$ we get statement (3). The last part follows from Rosenthal's $\ell_1$-lemma.
\end{proof}

\begin{prob}
Given a bounded sequence $(e_n)_{n\in\mathbb N}$ in a Banach space $E$, characterize when $(|\delta_{e_n}|)_{n\in\mathbb N}$ is weakly null in $FBL[E]$ and when it is equivalent to the basis of $\ell_1$.
\end{prob}

Given the universal property of free Banach lattices, this last question is as much as to ask when is the sequence of absolute values $(|v_n|)_{n\in\mathbb N}$ weakly null for \emph{any} sequence $(v_n)_{n\in\mathbb N}$ equivalent to $(e_n)_{n\in\mathbb N}$ in \emph{any} Banach lattice, and when is the sequence of absolute values $(|v_n|)_{n\in\mathbb N}$ an $\ell_1$-sequence for \emph{some} sequence $(v_n)_{n\in\mathbb N}$ equivalent to $(e_n)_{n\in\mathbb N}$ in \emph{some} Banach lattice.

Of course, one can wonder what happens with other particular lattice expressions, instead of the absolute value. For example, we have the following simple fact about the positive and negative part:

\begin{prop}
	Let $(e_n)_{n\in\mathbb N}$ be a sequence of vectors in a Banach space $E$. The sequences $((\delta_{e_n})_+)_{n\in\mathbb N}$ and $((\delta_{e_n})_-)_{n\in\mathbb N}$ are 1- equivalent in $FBL[E]$.
\end{prop}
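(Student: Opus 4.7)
The plan is to invoke the universal property of $FBL[E]$ applied to the linear isometry $T=-\mathrm{id}_E\colon E\to E$. This produces a unique lattice homomorphism $\hat T\colon FBL[E]\to FBL[E]$ with $\|\hat T\|=\|T\|=1$ and $\hat T(\delta_x)=\delta_{-x}=-\delta_x$ for all $x\in E$. Since $T\circ T=\mathrm{id}_E$, uniqueness in the universal property forces $\hat T\circ\hat T=\mathrm{id}_{FBL[E]}$, so $\hat T$ is a surjective isometry (as both $\hat T$ and its inverse $\hat T$ have norm at most $1$).

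Next I would exploit that $\hat T$ is a \emph{lattice} homomorphism. Applying $\hat T$ to $(\delta_{e_n})_+=\delta_{e_n}\vee 0$ gives
\[
\hat T\bigl((\delta_{e_n})_+\bigr)=\hat T(\delta_{e_n})\vee\hat T(0)=(-\delta_{e_n})\vee 0=(\delta_{e_n})_-,
\]
and symmetrically $\hat T((\delta_{e_n})_-)=(\delta_{e_n})_+$. Consequently, for every finite sequence of scalars $(\lambda_n)$,
\[
\hat T\Bigl(\sum_n \lambda_n(\delta_{e_n})_+\Bigr)=\sum_n\lambda_n(\delta_{e_n})_-,
\]
and since $\hat T$ is an isometry, the norms of the two sides coincide. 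This is precisely the $1$-equivalence of the two sequences.

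The argument is entirely formal and the only conceivable obstacle is checking that the involution $\hat T$ is indeed isometric; this is handled cleanly by the above $T^2=\mathrm{id}$ trick rather than by a direct computation with the explicit formula for $\|\cdot\|_{FBL[E]}$.
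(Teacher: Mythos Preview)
Your argument is correct and is essentially the paper's own proof: both apply the universal property to the map $x\mapsto -\delta_x$ (the paper writes this directly as $T=-\phi_E:E\to FBL[E]$, while you phrase it as $-\mathrm{id}_E$ and implicitly post-compose with $\phi_E$), obtain the norm-one lattice homomorphism $\hat T$ swapping $(\delta_{e_n})_+$ and $(\delta_{e_n})_-$, and conclude $1$-equivalence. Your explicit justification that $\hat T$ is an isometry via $T^2=\mathrm{id}_E$ and uniqueness is a nice touch that the paper leaves implicit.
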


\begin{proof}
	Let $T:E\rightarrow FBL[E]$ be given by $T=-\phi$, in other words $T(x)=-\delta_x$ for $x\in E$. By the free property of $FBL[E]$ there is only one lattice homomorphism $\hat T:FBL[E]\rightarrow FBL[E]$ extending $T$ with $\|\hat T\|=\|T\|=1$. Since
	$$
	\hat T( (\delta_{e_n})_+)=(\delta_{e_n})_-, \quad\quad \text{and} \quad\quad \hat T( (\delta_{e_n})_-)=(\delta_{e_n})_+,
	$$
	the conclusion follows.
\end{proof}

\section{$FBL[c_0]$ and $FBL[\ell_p]$ for $p\geq 2$}\label{FBL[ell_p]}

The aim of this section is to determine the behavior of the sequence $(|\delta_{e_n}|)_{n\in\mathbb N}$ in $FBL[c_0]$ and $FBL[\ell_p]$ for $p\geq2$, where $(e_n)_{n\in\mathbb N}$ is the unit vector basis of the corresponding space.

\begin{lem}\label{l:krivinep}
Let $p\geq2$ and $X$ be a Banach lattice. Let $T:\ell_p\rightarrow X$ be a bounded linear operator and let $y_n=Te_n$ where $(e_n)_{n\in\mathbb N}$ denotes the canonical basis of $\ell_p$. For any scalars $(\lambda_i)_{i=1}^m$ we have
$$
\Big\|\sum_{i=1}^m \lambda_i |y_i|\Big\|\leq \|T\| K_G \Big(\sum_{i=1}^m |\lambda_i|^r\Big)^{\frac1r},
$$
where $\frac1r=\frac12+\frac1p$ and $K_G$ denotes Grothendieck's constant.
\end{lem}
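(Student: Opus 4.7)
The plan is to combine a Cauchy--Schwarz splitting with Krivine's inequality. After replacing each $\lambda_i$ by $|\lambda_i|$, I may assume $\lambda_i \geq 0$.

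The identity $\tfrac{1}{r} = \tfrac{1}{p} + \tfrac{1}{2}$ suggests factoring $\lambda_i = \alpha_i \beta_i$ via $\alpha_i := \lambda_i^{r/p}$ and $\beta_i := \lambda_i^{r/2}$; this gives $\alpha_i \beta_i = \lambda_i$ together with
$$\|\alpha\|_p \, \|\beta\|_2 = \|\lambda\|_r^{r/p} \cdot \|\lambda\|_r^{r/2} = \|\lambda\|_r.$$
Now I would apply the Cauchy--Schwarz inequality in the Krivine functional calculus, with the scalars $\beta_i$ and the positive lattice elements $\alpha_i|y_i|$, obtaining in $X$
$$\sum_{i=1}^{m} \lambda_i |y_i| \;=\; \sum_{i=1}^{m} \beta_i\,(\alpha_i |y_i|) \;\leq\; \|\beta\|_2\,\Big(\sum_{i=1}^{m} \alpha_i^2 |y_i|^2\Big)^{1/2}.$$

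It remains to bound the square function in norm by $K_G \|T\|\,\|\alpha\|_p$. The tool for this is Krivine's theorem (see \cite[Theorem~1.f.14]{LT2}): for any bounded operator $S$ between Banach lattices and any $x_1,\ldots,x_m$ in its domain,
$$\Big\|\Big(\sum_{i=1}^{m} |Sx_i|^2\Big)^{1/2}\Big\| \;\leq\; K_G\|S\| \Big\|\Big(\sum_{i=1}^{m} |x_i|^2\Big)^{1/2}\Big\|.$$
Applied with $S = T$ and $x_i = \alpha_i e_i \in \ell_p$, and using that the $e_i$ are disjoint so that $|Tx_i| = \alpha_i|y_i|$ and $\|(\sum_i \alpha_i^2 e_i^2)^{1/2}\|_{\ell_p} = \|\alpha\|_p$, this provides exactly the required estimate. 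Combining the two bounds yields $\|\sum_i \lambda_i |y_i|\| \leq K_G\|T\|\,\|\lambda\|_r$.

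The only non-routine ingredient is the Grothendieck--Krivine inequality above; the Cauchy--Schwarz step and the arithmetic with exponents are elementary. The hypothesis $p \geq 2$ enters only to ensure $r \geq 1$, so that both $\|\alpha\|_p$ and $\|\beta\|_2$ are genuine norms; for $p<2$ the same splitting would formally still produce some estimate, but the exponent $r$ would drop below $1$.
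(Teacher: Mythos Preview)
Your proof is correct and follows exactly the same route as the paper: the same Cauchy--Schwarz splitting $\lambda_i = \lambda_i^{r/2}\cdot\lambda_i^{r/p}$ inside the lattice, followed by Krivine's inequality applied to $x_i = \lambda_i^{r/p} e_i$. The only (inessential) inaccuracy is the closing remark: $\|\alpha\|_p$ and $\|\beta\|_2$ are genuine norms regardless of $p\ge 1$; the point of $p\ge 2$ is rather that $r\ge 1$, so the resulting $\ell_r$-bound is not weaker than the trivial $\ell_1$-estimate.
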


\begin{proof}
For any scalars $(\lambda_i)_{i=1}^m$, using the fact that $\frac1r=\frac12+\frac1p$ and Cauchy-Schwarz inequality, we get
$$
\Big\|\sum_{i=1}^m \lambda_i |y_i|\Big\|\leq \Big\|\sum_{i=1}^m |\lambda_i|^{\frac{r}{2}} |\lambda_i|^{\frac{r}{p}} |y_i|\Big\|\leq\Big(\sum_{i=1}^m |\lambda_i|^r\Big)^{\frac12} \Big\|\Big(\sum_{i=1}^m (|\lambda_i|^{\frac{r}{p}}|y_i|)^2\Big)^{1/2}\Big\|.
$$

By Krivine's analogue of Grothendieck's inequality for Banach lattices \cite[Theorem 1.f.14]{LT2}, given any choice of vectors $(x_i)_{i=1}^m\subset \ell_p$, we have that
$$
\Big\|\Big(\sum_{i=1}^m |Tx_i|^2\Big)^{1/2}\Big\|\leq K_G\|T\| \Big\|\Big(\sum_{i=1}^m |x_i|^2\Big)^{1/2}\Big\|.
$$
Therefore, choosing $x_i=|\lambda_i|^{\frac{r}{p}} e_i$ in $\ell_p$, it follows that
$$
\Big\|\Big(\sum_{i=1}^m (|\lambda_i|^{\frac{r}{p}}|y_i|)^2\Big)^{1/2}\Big\|\leq K_G\|T\| \Big\|\Big(\sum_{i=1}^m (|\lambda_i|^{\frac{r}{p}} |e_i|)^2\Big)^{1/2}\Big\|\leq K_G\|T\| \Big(\sum_{i=1}^m |\lambda_i|^r\Big)^{\frac{1}{p}}.
$$
Thus, we get
$$
\Big\|\sum_{i=1}^m \lambda_i |y_i|\Big\|\leq K_G\|T\|\Big(\sum_{i=1}^m |\lambda_i|^r\Big)^{\frac12} \Big(\sum_{i=1}^m |\lambda_i|^r\Big)^{\frac1p}= K_G\|T\| \Big(\sum_{i=1}^m |\lambda_i|^r\Big)^{\frac1r}.
$$
\end{proof}

\begin{rem}
The above proof in the case $p=\infty$ corresponds to the same result for the space $c_0$ with $r=2$. That is, if $T:c_0\rightarrow X$ is a bounded linear operator, then, for any scalars $(\lambda_i)_{i=1}^m$, we have
$$
\Big\|\sum_{i=1}^m \lambda_i |Te_i|\Big\|\leq \|T\| K_G \Big(\sum_{i=1}^m |\lambda_i|^2\Big)^{\frac12}.
$$
\end{rem}

We will see next that the previous estimate is sharp. More precisely, we have the following:

\begin{thm}\label{p:p>2}
Let $p>2$ and take $r$ such that $\frac1r=\frac12+\frac1p$. If $(e_n)_{n\in\mathbb N}$ is the canonical basis of $\ell_p$, then for every scalars $(\lambda_i)_{i=1}^m$
$$
\Big(\sum_{i=1}^m |\lambda_i|^r\Big)^{\frac1r}\leq \Big\|\sum_{i=1}^m \lambda_i |\delta_{e_i}|\Big\|_{FBL[\ell_p]}\leq K_G \Big(\sum_{i=1}^m |\lambda_i|^r\Big)^{\frac1r}.
$$
\end{thm}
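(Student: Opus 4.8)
The upper bound is immediate and holds for all scalars: applying Lemma~\ref{l:krivinep} to the canonical isometric embedding $T=\phi_{\ell_p}\colon \ell_p\to FBL[\ell_p]$, for which $\|T\|=1$ and $Te_i=\delta_{e_i}$, gives $\big\|\sum_i\lambda_i|\delta_{e_i}|\big\|\le K_G\big(\sum_i|\lambda_i|^r\big)^{1/r}$ at once. So the whole content lies in the lower bound, and the plan is to extract it from the explicit description of the norm of $FBL[\ell_p]$ by exhibiting a nearly optimal family of norming functionals.

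Identifying $\ell_p^\ast=\ell_{p'}$, one has $\big(\sum_i\lambda_i|\delta_{e_i}|\big)(\phi)=\sum_i\lambda_i|\phi_i|$, so I must produce $x_1^\ast,\dots,x_n^\ast\in\ell_{p'}$ with $\sup_{\|x\|_p\le1}\sum_k|x_k^\ast(x)|\le1$ while $\sum_k\big|\sum_i\lambda_i|(x_k^\ast)_i|\big|$ reaches $\big(\sum_i|\lambda_i|^r\big)^{1/r}$. I would look for functionals of the product form $x_k^\ast=(\theta_{k,i}c_i)_i$, where $c_i\ge0$ are fixed weights and $\theta_{k,i}\in\{\pm1\}$ are signs to be chosen. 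The point of this ansatz is that the objective term $\sum_i\lambda_i|(x_k^\ast)_i|=\sum_i\lambda_ic_i=\langle\lambda,c\rangle$ does not see the signs, so each of the $n$ rows contributes the same amount and the total objective is simply $n\langle\lambda,c\rangle$, while the signs are left free to shrink the constraint.

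The weights are then chosen to realize the $\ell_r$--$\ell_{r'}$ duality: taking $c_i=|\lambda_i|^{r/r'}$ (with $\tfrac1{r'}=\tfrac12-\tfrac1p$) gives $\langle\lambda,c\rangle=\sum_i|\lambda_i|^r$ and $\|c\|_{r'}=\big(\sum_i|\lambda_i|^r\big)^{1/r'}$, whence $\langle\lambda,c\rangle/\|c\|_{r'}=\big(\sum_i|\lambda_i|^r\big)^{1/r}$. To bound the constraint I would take $(\theta_{k,i})$ to be the first $m$ columns of a Hadamard matrix of size $N\ge m$. Column orthogonality gives $\theta^{\mathsf T}\theta=N\,I_m$, hence for the vector $w=(c_ix_i)_i$ one has $\|\theta w\|_2=\sqrt N\,\|w\|_2$, and Cauchy--Schwarz in $\mathbb{R}^N$ yields $\sum_k\big|\sum_i\theta_{k,i}c_ix_i\big|=\|\theta w\|_1\le\sqrt N\,\|\theta w\|_2=N\big(\sum_i c_i^2x_i^2\big)^{1/2}$. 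The elementary fact $\sup_{\|x\|_p\le1}\big(\sum_i c_i^2x_i^2\big)^{1/2}=\|c\|_{r'}$ — a H\"older duality with exponent $p/(p-2)$, precisely using $\tfrac1{r'}=\tfrac12-\tfrac1p$ — then bounds the constraint by $N\|c\|_{r'}$. Rescaling the functionals by $(N\|c\|_{r'})^{-1}$ makes the constraint $\le1$ and turns the objective into $\langle\lambda,c\rangle/\|c\|_{r'}=\big(\sum_i|\lambda_i|^r\big)^{1/r}$, as required.

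The step I expect to be the main obstacle is exactly this uniform-in-$x$ control of the norming quantity $\sum_k\big|\sum_i\theta_{k,i}c_ix_i\big|$ by the $\ell_2$ square function $\big(\sum_i c_i^2x_i^2\big)^{1/2}$: this is what forces the exponent $r$ and is where the choice of signs is essential. Random signs together with Khintchine's inequality give this bound in expectation (hence for some choice) up to a universal constant, which already yields an $\ell_r$-equivalence; the Hadamard choice is what delivers the clean constant $1$. Finally I would note that the objective is sign-insensitive only because all summands $\lambda_ic_i$ carry the sign of $\lambda_i$, so the construction genuinely attains $\big(\sum_i|\lambda_i|^r\big)^{1/r}$ on nonnegative coefficients, which is the case exhibiting the sharpness of the upper estimate.
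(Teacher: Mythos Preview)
Your proof is essentially the same as the paper's: both obtain the upper bound from Lemma~\ref{l:krivinep} applied to the isometry $\phi_{\ell_p}$, and for the lower bound both test the $FBL$-norm against functionals of the product form $x_k^\ast=(\theta_{k,i}c_i)_i$ with weights $c_i\propto|\lambda_i|^{r-1}$ and signs $(\theta_{k,i})$ coming from a Walsh/Hadamard matrix, then bound the constraint via Cauchy--Schwarz, column orthogonality, and H\"older with exponent $r'=\frac{2p}{p-2}$. The only cosmetic differences are that the paper pads $m$ to a power of $2$ and normalizes the weights up front, while you take the first $m$ columns of a Hadamard matrix of some size $N\ge m$ and rescale at the end.
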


\begin{proof}
Since $(\delta_{e_n})_{n\in\mathbb N}$ span an isometric copy of $\ell_p$ in $FBL[\ell_p]$, the right-hand side inequality follows from Lemma \ref{l:krivinep}.

For the converse inequality, given  $(\lambda_i)_{i=1}^m$, let $b_i=|\lambda_i|^{r-1}/(\sum_{j=1}^m |\lambda_j|^r)^{1-\frac1r}$. Without loss of generality assume $m=2^k$ for some $k\in\mathbb N$, and let $W=(w_{ij})_{i,j=1}^{2^k}$ be a Walsh matrix, that is $w_{ij}\in\{+1,-1\}$ such that the rows (and columns) are orthogonal. For $j\leq m$, let $x_j^*\in \ell_{p}^*$ be given by
$$
x_j^*(e_i)=\left\{
\begin{array}{ccc}
 \frac{b_i}{m}w_{ij} &   & \text{for } i\leq m, \\
  &   &   \\
0  &   & \text{otherwise.}
\end{array}
\right.
$$

We claim that $\sup_{x\in B_{\ell_p}} \sum_{j=1}^m |x_j^\ast(x)|\leq 1$.

Indeed, first note that by the orthogonality of the columns in $W$ it follows that for every choice of signs $(\sigma_j)\in\{+1,-1\}^m$
\begin{eqnarray*}
\sum_{i=1}^m\Big|\sum_{j=1}^m w_{ij}\sigma_j\Big|^2&=&\sum_{i=1}^m\Big(\sum_{j=1}^m w_{ij}\sigma_j\Big)\Big(\sum_{l=1}^m w_{il}\sigma_l\Big)\\
&=&\sum_{j,l=1}^m \sigma_j\sigma_l\sum_{i=1}^m w_{ij}w_{il}\\
&=&\sum_{j=1}^m\sigma_j^2 m=m^2.
\end{eqnarray*}
Therefore, using Cauchy-Schwarz inequality, the previous computation and H\"older's inequality we have
\begin{eqnarray*}
\sup_{x\in B_{\ell_p}} \sum_{j=1}^m |x_j^\ast(x)|&=&\sup_{(a_i)\in B_{\ell_p}} \sup_{(\sigma_j)\in\{+1,-1\}^m}  \sum_{j=1}^m \sigma_j x_{j}^\ast( \sum_{i=1}^\infty a_i e_i)\\
&=&\frac1m\sup_{(a_i)\in B_{\ell_p}} \sup_{(\sigma_j)\in\{+1,-1\}^m} \sum_{i=1}^m b_i a_i \sum_{j=1}^m w_{ij}\sigma_j\\
&\leq&\frac1m\sup_{(a_i)\in B_{\ell_p}}\Big(\sum_{i=1}^m |b_i a_i|^2\Big)^{\frac12}\sup_{(\sigma_j)\in\{+1,-1\}^m} \left(\sum_{i=1}^m \left|\sum_{j=1}^m w_{ij}\sigma_j\right|^2\right)^{\frac12}\\
&\leq&\sup_{(a_i)\in B_{\ell_p}}\Big(\sum_{i=1}^m |b_i a_i|^2\Big)^{\frac12}\\
&\leq&\sup_{(a_i)\in B_{\ell_p}}\Big(\sum_{i=1}^m |b_i|^{\frac{r}{r-1}}\Big)^{\frac{r-1}{r}} \Big(\sum_{i=1}^m |a_i|^p\Big)^{\frac1p}\leq1.
\end{eqnarray*}

Hence, it follows that
$$
\Big\|\sum_{i=1}^m \lambda_i |\delta_{e_i}|\Big\|_{FBL[c_0]}\geq \sum_{j=1}^m \sum_{i=1}^m |\lambda_i x_{j}^\ast(e_i)| = \frac1m\sum_{j=1}^m \sum_{i=1}^m |\lambda_i| b_i=\Big(\sum_{i=1}^m |\lambda_i|^r\Big)^{\frac1r}.
$$
\end{proof}

\begin{rem}\label{c_0}
As with Lemma \ref{l:krivinep}, for $p=\infty$, we can take $r=2$, and the above proof yields that if $(e_n)_{n\in\mathbb N}$ is the canonical basis of $c_0$, then for every scalars $(\lambda_i)_{i=1}^m$
$$
\Big(\sum_{i=1}^m \lambda_i^2\Big)^{\frac12}\leq \Big\|\sum_{i=1}^m \lambda_i |\delta_{e_i}|\Big\|_{FBL[c_0]}\leq K_G \Big(\sum_{i=1}^m \lambda_i^2\Big)^{\frac12}.
$$
\end{rem}

\section{When is $FBL[E]$ a WCG space?}

Let us recall a Banach space $E$ is weakly compactly generated (WCG) if there is some weakly compact set $K\subset E$ such that $\overline{span}(K)=E$. We know that when $E$ is isomorphic to a Banach lattice, then it follows from \cite[Corollary 2.6]{ART} that $E$ is WCG if so is $FBL[E]$. However, the converse does not hold in general: we know $FBL[\ell_p(\Gamma)]$ for $p\in[1,2]$ and uncountable $\Gamma$ are not WCG \cite[Corollary 5.5]{ART}. We will see next that the situation when $p>2$ is quite the opposite.\\

\begin{thm}
Let $E$ be a Banach space and $G\subset E$ such that $\overline{span}(G) = E$. Suppose that $(|\delta_{u_m}| )_{m\in\mathbb N}$ is a weakly null sequence in $FBL[E]$ for every infinite sequence of distinct elements $(u_m)_{m\in\mathbb N}\subset G$. Then $FBL[E]$ is a WCG Banach space.
\end{thm}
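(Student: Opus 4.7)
The plan is to exhibit a weakly compact set $K \subset FBL[E]$ with $\overline{\mathrm{span}}(K) = FBL[E]$, which is the definition of WCG.

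The natural first candidate is
$$
K_0 = \{0\} \cup \{|\delta_u| : u \in G,\ \|u\| \leq 1\} \cup \{\delta_u : u \in G,\ \|u\| \leq 1\}.
$$
Weak compactness of $K_0$ follows from the hypothesis via Eberlein--Smulian: any sequence in $K_0$ either has a constant subsequence, or, after passing to one of the three components and relabeling, is indexed by distinct $u_n \in G$, in which case $(|\delta_{u_n}|)$ is weakly null by hypothesis. The conclusion for $(\delta_{u_n})$ is then automatic, because every bounded functional on a Banach lattice splits as a difference of positive ones and for positive $\ell$ one has $|\ell(\delta_{u_n})| \leq \ell(|\delta_{u_n}|) \to 0$. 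As a by-product, $E$ itself is WCG: the weakly compact set $\{\delta_u : u \in G,\ \|u\| \leq 1\} \cup \{0\}$ pulls back through the isometric embedding $\phi_E$ to a weakly compact total subset of $E$.

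The heart of the proof is to argue that the closed linear span of $K_0$ equals $FBL[E]$. Since $\overline{\mathrm{span}}\{\delta_u : u \in G\} = \phi_E(\overline{\mathrm{span}}(G)) = \phi_E(E)$, the closed span of $K_0$ contains $\phi_E(E)$, and $FBL[E]$ is the smallest closed sublattice of itself containing $\phi_E(E)$. It would therefore suffice to show that $\overline{\mathrm{span}}(K_0)$ is stable under the modulus operation. If such a direct argument is too optimistic, my fallback is the iteration $W_0 = \overline{\mathrm{span}}(K_0)$, $W_{n+1} = \overline{W_n + \mathrm{span}\{|f| : f \in W_n\}}$, and $W_\infty = \overline{\bigcup_n W_n}$. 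Then $W_\infty$ is a closed sublattice of $FBL[E]$ containing $\phi_E(E)$, forcing $W_\infty = FBL[E]$; and WCG is preserved under countable unions (via the standard trick of rescaling weakly compact generators by $2^{-n}$ and taking their union with $\{0\}$), so the remaining task is to propagate WCG through the iteration.

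The main obstacle is precisely this propagation: the modulus $|\cdot|$ is not weakly sequentially continuous in general, so weak compactness of a generating set for $W_n$ does not trivially yield weak compactness of the set of its absolute values. To overcome this I would exploit the concrete realization of $FBL[E]$ as a sublattice of positively homogeneous functions on $E^*$, together with the fact that $B_{E^*}$ is Eberlein compact (by Amir--Lindenstrauss, using that $E$ is WCG), in order to control the absolute values within carefully chosen weakly compact sets.
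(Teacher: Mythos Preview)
Your opening steps are fine: $K_0$ is indeed weakly compact by Eberlein--\v{S}mulian and the hypothesis, and your deduction that $E$ itself is WCG is correct. The genuine gap is exactly where you locate it: the propagation of WCG through the iteration $W_n \mapsto W_{n+1}$. The remedy you suggest, however, cannot work. You propose to exploit that $B_{E^*}$ is Eberlein compact (since $E$ is WCG) to control absolute values. But this line of reasoning makes no use of the specific hypothesis on $G$ beyond the fact that $E$ is WCG, so it would apply verbatim to $E = \ell_2(\Gamma)$ with $\Gamma$ uncountable; yet $FBL[\ell_2(\Gamma)]$ is \emph{not} WCG (it contains $\ell_1(\Gamma)$, by the paper's equation~\eqref{p<2}). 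Hence no argument based solely on Eberlein compactness of $B_{E^*}$ can close the gap; the hypothesis must re-enter at every stage, not just at the base.

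The paper avoids the iteration entirely. It fixes a countable list $P_1,P_2,\ldots$ of all rational lattice polynomials, uses the $\ell_1$-lifting property to obtain a uniform bound $r_n$ on $\|P_n(\delta_{u_1},\ldots,\delta_{u_{k_n}})\|$ independent of the $u_i\in G$, and takes
\[
S = \left\{ \tfrac{1}{n r_n} P_n(\delta_{u_1},\ldots,\delta_{u_{k_n}}) : n\in\mathbb{N},\ u_i\in G \right\}.
\]
This spans $FBL[E]$ outright. Sequential weak compactness of $S$ reduces (after the $1/n$ rescaling disposes of varying $n$) to showing that for a \emph{fixed} polynomial $P$, the sequence $z_m := P(\delta_{u_1^m},\ldots,\delta_{u_k^m})$ has a weakly convergent subsequence. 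The key idea you are missing is an induction on the complexity of $P$ showing that $|z_m(P) - z(P)|$ is weakly null, driven by the lattice inequality
\[
|a\vee b - c\vee d| \;\leq\; |a-c|\vee|b-d| \;\leq\; |a-c| + |b-d|.
\]
The hypothesis on $G$ is invoked precisely at the atomic step $P = x_i$, and the inequality propagates it through $\vee$ and linear combinations. This is what replaces your failed propagation step: one never takes the modulus of an arbitrary weakly convergent sequence, only of differences that are already controlled by the induction.
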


\begin{proof}
We can suppose that all elements in $G$ have norm at most 1. Enumerate as $P_1(x_1,\ldots,x_{k_1}),P_2(x_1,\ldots,x_{k_2}),P_3(x_1,\ldots,x_{k_3}),\ldots$ all polynomial expressions on finitely many variables using rational linear combinations and the $\wedge$ and $\vee$ operations. For every $n$, let $r_n = \|P_n(\delta_{e_1},\ldots,\delta_{e_{k_n}})\|$ in $FBL[\ell_1]$. By the lifting property of $\ell_1$, whenever $u_1,\ldots,u_{k_n}\in G$, there is an operator $T:\ell_1\To E$ with $\|T\|\leq 1$ and $T(e_i)=u_{k_i}$ for $i\leq n$. By the universal property of the free Banach lattice, $T$ extends to a Banach lattice homomorphism $\hat{T}:FBL[\ell_1]\To FBL[E]$ with $\|\hat{T}\|\leq 1$. A lattice homomorphism commutes with a polynomial expression like $P_n$, so
$$\left\|P_n(\delta_{u_1},\ldots,\delta_{u_{k_n}})\right\| = \left\|\hat{T}(P_n(\delta_{e_1},\ldots,\delta_{e_{k_n}}))\right\|\leq r_n. $$
Let $$S = \left\{\frac{1}{nr_n}P_n(\delta_{u_1},\ldots,\delta_{u_{k_n}}) : n\in\mathbb{N}, r_n\neq 0,  u_1,\ldots,u_{k_n}\in G \right\}.$$

It is clear that $S$ generates $FBL[E]$ as a Banach space. We claim that $S$ is relatively weakly compact. By the Eberlein-\v{S}mulian theorem \cite[Theorem 1.6.3]{AlbKal}, it is enough to show that every sequence $\sigma$ of elements of $S$ has a weakly convergent subsequence. Since $$\left\|\frac{1}{nr_n}P_n(\delta_{u_1},\ldots,\delta_{u_{k_n}})\right\| \leq \frac{1}{n},$$
if a sequence $\sigma$ of elements of $S$ contains elements $\frac{1}{nr_n}P_n(\delta_{u_1},\ldots,\delta_{u_{k_n}})$ for infinitely many different numbers $n$, then it a contains a subsequence that converges to 0 in norm. So we can suppose that our sequence $\sigma$ is made of elements of $S$ in which the number $n$ is fixed. In other words, it is enough to see that for any given polynomial $P(x_1,\ldots,x_k)$ and any elements $\{u_i^m : i=1,\ldots,k,\ m\in \mathbb{N}\}\subset G$, we have that the sequence $$\left\{P\left(\delta_{u_1^m},\ldots,\delta_{u_k^m}\right) : m\in\mathbb{N}\right\}$$
has a weakly convergent subsequence. We can also suppose, by passing to a further subsequence and reordering variables, that we have $q<m$ such that $\{u^m_i : m\in\mathbb{N}\}$ is constant equal to $u_i$ when $i\leq q$, while $u^m_i \neq u^{m'}_i$
whenever $m\neq m'$ and $i>q$. We claim that in that case, the sequence

$$\left\{z_m(P) := P\left(\delta_{u_1},\ldots,\delta_{u_q},\delta_{u_{q+1}^m},\ldots,\delta_{u_k^m}\right) : m\in\mathbb{N}\right\}$$  weakly converges to  $z(P) := P\left(\delta_{u_1},\ldots,\delta_{u_q},0,\ldots, 0\right)$. For this we show, by induction on the complexity of $P$ that $|z_m(P) - z(P)|$ is weakly null. Induction on the complexity means that we first notice that the statement is true for any atomic polynomial of the form $P(x_1,\ldots,x_k) = x_i$, and then we shall prove that if the statement holds for $P$ and $Q$, then it also holds for $P\vee Q$ and any linear combination $\lambda P + \mu Q$ (remember that $P\wedge Q = -(-P\vee - Q)$). The case of the atomic polynomial is given precisely by the hypothesis of the theorem. The case of $P\vee Q$ follows from the following computation, and the fact that a positive sequence bounded above by a weakly null sequence is weakly null:

$$\left|z_m(P\vee Q) - z(P\vee Q)\right| = \left| z_m(P)\vee z_m(Q) - z(P)\vee z(Q)\right| \leq$$ $$\left| z_m(P) - z(P)\right| \vee \left|z_m(Q) - z(Q)\right| \leq \left| z_m(P) - z(P)\right| + \left|z_m(Q) - z(Q)\right.|$$

The case of the linear combination is similar, using just triangle inequality.
\end{proof}

This last theorem together with the results of Section \ref{FBL[ell_p]} yield the following counterpart to \cite[Corollary 5.5]{ART}:

\begin{cor}\label{wcg}
For any set $\Gamma$ and $p>2$ the Banach lattices $FBL[\ell_p(\Gamma)]$ and $FBL[c_0(\Gamma)]$ are WCG.
\end{cor}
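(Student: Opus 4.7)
The plan is to deduce the corollary directly from the preceding theorem, taking $G$ to be the canonical basis $\{e_\gamma : \gamma\in\Gamma\}$ of $E := \ell_p(\Gamma)$ or $c_0(\Gamma)$. Since $\overline{span}(G) = E$ in both cases, it suffices to verify the single hypothesis of that theorem: for every infinite sequence $(u_m)_{m\in\mathbb{N}}$ of distinct elements of $G$, the sequence of absolute values $(|\delta_{u_m}|)_{m\in\mathbb{N}}$ is weakly null in $FBL[E]$.

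Fix such a sequence, and write $u_m = e_{\gamma_m}$ with the $\gamma_m$ pairwise distinct. Define $T:\ell_p\to\ell_p(\Gamma)$ (respectively $T:c_0\to c_0(\Gamma)$) on the canonical basis by $T(e_n)=e_{\gamma_n}=u_n$; since the $\gamma_m$ are distinct, $T$ is an isometric embedding. By the universal property of the free Banach lattice, $T$ extends to a lattice homomorphism $\hat{T}:FBL[\ell_p]\to FBL[\ell_p(\Gamma)]$ (respectively $\hat{T}:FBL[c_0]\to FBL[c_0(\Gamma)]$) of norm $1$. Because lattice homomorphisms commute with the absolute value, $\hat{T}(|\delta_{e_n}|) = |\delta_{u_n}|$ for every $n$.

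Now I invoke the work of Section~\ref{FBL[ell_p]}: by Theorem~\ref{p:p>2} (respectively Remark~\ref{c_0}), the sequence $(|\delta_{e_n}|)_{n\in\mathbb{N}}$ in $FBL[\ell_p]$ (respectively $FBL[c_0]$) is equivalent to the canonical basis of $\ell_r$ with $\frac{1}{r}=\frac{1}{2}+\frac{1}{p}$ (respectively of $\ell_2$). In both cases the exponent is strictly greater than $1$, so this sequence is weakly null in its ambient free Banach lattice. Bounded linear operators preserve weak nullity, hence $(|\delta_{u_m}|)_{m\in\mathbb{N}} = (\hat{T}(|\delta_{e_m}|))_{m\in\mathbb{N}}$ is weakly null in $FBL[E]$, and the previous theorem applies to conclude that $FBL[E]$ is WCG.

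I do not expect a genuine obstacle here: the corollary is a packaging of the quantitative estimates of Section~\ref{FBL[ell_p]} together with the WCG criterion of the previous theorem. The only conceptual point is the functoriality observation that an isometric embedding of Banach spaces lifts to a norm-one lattice homomorphism of the associated free Banach lattices which carries $|\delta_{\,\cdot\,}|$ to $|\delta_{\,\cdot\,}|$, allowing us to reduce the weak-null verification from $FBL[\ell_p(\Gamma)]$ (respectively $FBL[c_0(\Gamma)]$) to the separable case already handled in the previous section.
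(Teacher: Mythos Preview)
Your proof is correct and follows exactly the route the paper intends: apply the preceding theorem with $G$ the canonical basis and verify its hypothesis using the $\ell_r$-estimates from Section~\ref{FBL[ell_p]}. The functoriality step you spell out (lifting the isometric embedding $\ell_p\hookrightarrow\ell_p(\Gamma)$ to $FBL$) is a clean way to transfer the weak-null conclusion; equivalently one can apply Lemma~\ref{l:krivinep} directly to the operator $\ell_p\to FBL[\ell_p(\Gamma)]$, $e_n\mapsto\delta_{e_{\gamma_n}}$, but this is the same argument.
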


Given $1< p<\infty$, let us say that a basis $(x_\gamma)_{\gamma\in \Gamma}$ of a Banach space satisfies an upper (respectively, lower) $p$-estimate if there is $C>0$ such that for any scalars $(a_\gamma)_{\gamma\in\Gamma}$ we have
$$
\Big\|\sum_{\gamma} a_\gamma x_\gamma\Big\|\leq C \| (a_\gamma) \|_{\ell_p(\Gamma)},
$$
(respectively, 
$$
\Big\|\sum_{\gamma} a_\gamma x_\gamma\Big\|\geq C \| (a_\gamma) \|_{\ell_p(\Gamma)}.)
$$
It is immediate from the Corollary that if a Banach space $E$ has a (long) basis which satisfies an upper $p$-estimate for some $p>2$, then $FBL[E]$ is also WCG. On the other hand, if the long basis of $E$ satisfies a lower $2$-estimate, then $FBL[E]$ contains $\ell_1(\Gamma)$ so it does not embed into a WCG Banach space.
\medskip

In particular, the above shows that for any uncountable set $\Gamma$, if $p\leq 2$ and $q>2$, then $FBL[\ell_p(\Gamma)]$ and $FBL[\ell_q(\Gamma)]$ are not isomorphic even as Banach spaces. It would be interesting to find a direct argument to distinguish the spaces $FBL[\ell_r]$ for different values of $r$. In this direction we have the following:

\begin{thm}\label{pqdif}
Let $p,q\in[1,\infty)$ with $p\neq q$. There is no lattice isomorphism between $FBL[\ell_p]$ and $FBL[\ell_q]$.
\end{thm}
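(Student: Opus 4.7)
Suppose for contradiction that $\Phi\colon FBL[\ell_p]\to FBL[\ell_q]$ is a lattice isomorphism with $p\neq q$ in $[1,\infty)$. Set $v_n=\Phi(\delta_{e_n}^{(p)})\in FBL[\ell_q]$. The two simultaneous structures on $\Phi$ yield two basic facts: $(v_n)$ is equivalent to the canonical $\ell_p$-basis in $FBL[\ell_q]$ (because $\Phi$ is a Banach space isomorphism), and $|v_n|=\Phi(|\delta_{e_n}^{(p)}|)$ (because $\Phi$ is a lattice homomorphism). Combined with Theorem~\ref{p:p>2} for $p>2$, or with \eqref{p<2} from \cite{ART} for $p\leq 2$, the latter identification forces $(|v_n|)$ to be equivalent in $FBL[\ell_q]$ to the $\ell_{r_p}$-basis (with $1/r_p=1/2+1/p$) or to the $\ell_1$-basis, respectively. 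Symmetric statements hold for $w_n=\Phi^{-1}(\delta_{e_n}^{(q)})$ in $FBL[\ell_p]$.

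The heart of the approach is to contradict the simultaneous existence of these two $\ell_r$-structures via Lemma~\ref{l:krivinep}. In the regime $p,q\geq 2$ with, say, $p>q$, the sequence $(v_n)$ induces a bounded operator $T\colon\ell_q\to FBL[\ell_q]$ given by $T(e_n)=v_n$ (boundedness follows from $(v_n)$ being $\ell_p$-equivalent and from $\|\cdot\|_{\ell_p}\leq\|\cdot\|_{\ell_q}$ for $p\geq q$), and Lemma~\ref{l:krivinep} applied to $T$ yields an upper $\ell_{r_q}$-estimate on $\|\sum\lambda_i|v_i|\|_{FBL[\ell_q]}$. Transporting the lower bound of Theorem~\ref{p:p>2} through $\Phi$ yields an $\ell_{r_p}$-estimate from below on the same quantity. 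Because $r_p\neq r_q$, these two estimates should be incompatible on appropriately chosen test sequences; the natural choice is the Walsh-functional coefficients used in the proof of Theorem~\ref{p:p>2}, which certify the sharpness (not just the validity) of the $\ell_{r_p}$-lower bound. The symmetric argument with $\Phi^{-1}$ handles the regime $p<q$.

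The main obstacle is that $\|\cdot\|_{\ell_r}\geq\|\cdot\|_{\ell_s}$ for $r\leq s$, so the upper $\ell_{r_q}$- and lower $\ell_{r_p}$-estimates above are trivially compatible on a single test vector; the contradiction must therefore be extracted not from a single norm inequality but from the sharpness of Theorem~\ref{p:p>2}, by verifying that the specific Walsh-functional test vectors realize the $\ell_{r_p}$-norm up to absolute constants and comparing with the asymptotic behavior allowed by Lemma~\ref{l:krivinep}. A separate treatment is also needed when $p$ or $q$ belongs to $[1,2]$. In the mixed case $p\leq 2<q$, $(|v_n|)$ would be $\ell_1$-equivalent and hence fail to be weakly null in $FBL[\ell_q]$, whereas for $p>1$ the sequence $(v_n)$ itself is weakly null; this is the very obstruction addressed in the proof of Corollary~\ref{wcg}, and should be exploitable to rule out such a $\Phi$. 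In the remaining case $p,q\in[1,2]$ with $p\neq q$, the absolute-value bases coincide (both $\ell_1$), so one instead composes $\Phi$ with the lattice homomorphism $FBL[\ell_q]\to\ell_q$ extending the identity (supplied by the universal property) to obtain a bounded operator $\ell_p\to\ell_q$, whose detailed behavior — analyzed via Pitt's theorem and the fact that $\Phi$ is a lattice isomorphism — is to force $p=q$.
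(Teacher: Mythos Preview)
Your proposal does not yield a proof. The central difficulty is already visible in the regime $p,q>2$, say $p>q$. You correctly observe that transporting Theorem~\ref{p:p>2} through $\Phi$ makes $(|v_n|)$ an $\ell_{r_p}$-sequence in $FBL[\ell_q]$, and that Lemma~\ref{l:krivinep} applied to $T:\ell_q\to FBL[\ell_q]$, $Te_n=v_n$, gives an upper $\ell_{r_q}$-estimate. But since $r_q<r_p$ these two facts are compatible, and your proposed rescue via ``sharpness'' and the Walsh functionals cannot work: those functionals live in $\ell_p^\ast$ and witness the lower bound in $FBL[\ell_p]$, information that is already entirely encoded in the statement that $(|v_n|)$ is $\ell_{r_p}$-equivalent. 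Theorem~\ref{p:p>2} contains no further asymptotic content beyond that equivalence, so there is nothing left to contradict the upper $\ell_{r_q}$-estimate. The mixed case $p\leq 2<q$ has the same defect: that $(|v_n|)$ is $\ell_1$-equivalent while $(v_n)$ is weakly null is not a contradiction in a Banach lattice (it occurs in $FBL[\ell_p]$ itself), and nothing you invoke about $FBL[\ell_q]$ rules it out.

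The paper's argument is different and uniform in $p,q$. Assuming $p>q$, one composes the putative lattice isomorphism $T:FBL[\ell_p]\to FBL[\ell_q]$ with the canonical lattice projection $P:FBL[\ell_q]\to\ell_q$ (Lemma~\ref{projection}) to obtain a lattice homomorphism $PT:FBL[\ell_p]\to\ell_q$ that is invertible on a copy of $\ell_q$. The decisive step is Lemma~\ref{compact}: \emph{every} lattice homomorphism $S:FBL[\ell_p]\to\ell_q$ is compact. Each coordinate $e_i^\ast\circ S$ is a real-valued lattice homomorphism on $FBL[\ell_p]$, hence a point evaluation at some $x_i^\ast\in\ell_p^\ast$ (Lemma~\ref{latticehomo}); the tails $S-F_n$ are again lattice homomorphisms, so $\|S-F_n\|=\|(S-F_n)\circ\phi_{\ell_p}\|$, and the latter tends to $0$ by Pitt's theorem applied to $S\circ\phi_{\ell_p}:\ell_p\to\ell_q$. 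Your final paragraph, composing with the projection onto $\ell_q$ and invoking Pitt, is groping toward exactly this, but the missing idea is Lemma~\ref{compact}, which upgrades compactness of $S\circ\phi_{\ell_p}$ to compactness of $S$ and thereby dispenses with any case split relative to $2$.
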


We will need several facts for the proof which might be of independent interest.

\begin{lem}\label{latticehomo}
Given a Banach space $E$, $\varphi:FBL[E]\rightarrow \mathbb R$ is a linear lattice homomorphism if and only if there is $x^*\in E^*$ such that for every $f\in FBL[E]$
$$
\varphi(f)=f(x^*).
$$ 
\end{lem}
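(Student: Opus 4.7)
The plan divides naturally into the two directions of the equivalence.

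For the sufficient direction, given $x^*\in E^*$, set $\varphi_{x^*}(f):=f(x^*)$. Since the lattice operations in $H[E]\subset\mathbb{R}^{E^*}$ are defined pointwise on $E^*$, the map $\varphi_{x^*}$ is tautologically linear and commutes with $\vee$ and $\wedge$, hence is a lattice homomorphism. Its continuity on $FBL[E]$ is immediate from the explicit formula for $\|\cdot\|_{FBL[E]}$ recalled in Section~2: taking the single functional $x^*/\|x^*\|$ in the supremum gives $|f(x^*)|\leq \|x^*\|\,\|f\|_{FBL[E]}$.

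For the necessary direction, suppose $\varphi:FBL[E]\to\mathbb{R}$ is a linear lattice homomorphism. First I would observe that $\varphi$ is automatically positive: for $f\geq 0$ we have $f=f\vee 0$, and applying $\varphi$ gives $\varphi(f)=\varphi(f)\vee 0\geq 0$. Since positive linear functionals on a Banach lattice are automatically bounded, $\varphi$ is continuous. The natural candidate is then $x^*\in E^*$ defined by $x^*(x):=\varphi(\delta_x)$; it is linear because both $\phi_E$ and $\varphi$ are, and bounded with $\|x^*\|\leq \|\varphi\|$ since $\phi_E$ is an isometry.

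To conclude, I would invoke the universal property of $FBL[E]$ applied to the bounded operator $x^*:E\to\mathbb{R}$ (with $\mathbb{R}$ regarded as a Banach lattice): there is a \emph{unique} Banach lattice homomorphism $\widehat{x^*}:FBL[E]\to\mathbb{R}$ such that $\widehat{x^*}\circ\phi_E=x^*$. Both $\varphi$ and the evaluation $\varphi_{x^*}$ are continuous lattice homomorphisms satisfying this identity (for $\varphi_{x^*}$ since $\varphi_{x^*}(\delta_x)=\delta_x(x^*)=x^*(x)$; for $\varphi$ by the definition of $x^*$), so uniqueness forces $\varphi=\varphi_{x^*}$.

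The main obstacle, if any, is a foundational one: the clean invocation of uniqueness implicitly uses that $FBL[E]$ is the \emph{closed} sublattice of $H[E]$ generated by $\{\delta_x:x\in E\}$ (the description stated in Section~2 after the norm formula). This is what guarantees that two continuous lattice homomorphisms agreeing on the generators $\delta_x$ agree on the vector sublattice they generate, and then, by continuity, on its norm closure, which is all of $FBL[E]$.
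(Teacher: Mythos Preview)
Your argument is correct. The paper does not give its own proof here; it simply cites \cite[Corollary~2.6]{ART}. Your write-up supplies a self-contained proof along the expected lines: evaluation at a fixed $x^*$ is a continuous lattice homomorphism by the pointwise realisation of $FBL[E]$ and the norm formula, while conversely any lattice homomorphism $\varphi$ is positive, hence bounded, and therefore coincides with evaluation at $x^*:=\varphi\circ\phi_E$ by the density of the sublattice generated by $\{\delta_x:x\in E\}$. The only remark is that the uniqueness you invoke is really the elementary observation in your last paragraph (two continuous lattice homomorphisms agreeing on a lattice-generating set agree everywhere), rather than the norm-matching clause in the universal property as stated in Section~2; you already make this explicit, so there is no gap.
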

\begin{proof}
\cite[Corollary 2.6]{ART}.
\end{proof}

\begin{lem}\label{projection}
If $E$ is a Banach lattice, then there is a lattice homomorphism projection $P:FBL[E]\rightarrow E$.
\end{lem}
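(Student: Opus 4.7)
The plan is to invoke the universal property of $FBL[E]$ applied to the identity operator on $E$, exploiting the hypothesis that $E$ is already a Banach lattice (so $E$ itself serves as a target for the extension).

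More precisely, since $E$ is assumed to be a Banach lattice, the identity map $\mathrm{id}_E : E \to E$ is a bounded linear operator from the Banach space $E$ into a Banach lattice, with $\|\mathrm{id}_E\| = 1$. By the defining universal property of the free Banach lattice recalled in the introduction, there exists a unique Banach lattice homomorphism
$$P := \widehat{\mathrm{id}_E} : FBL[E] \longrightarrow E$$
such that $\|P\| = 1$ and $P \circ \phi_E = \mathrm{id}_E$; in particular $P(\delta_x) = x$ for every $x \in E$. The latter identity says precisely that $P$ is a left inverse for the canonical isometric embedding $\phi_E : E \hookrightarrow FBL[E]$, which is what we mean by ``projection'' here: $E$ is a lattice-complemented retract of $FBL[E]$ via the pair $(\phi_E, P)$.

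There is essentially no obstacle; the content of the lemma is that the universal property, whose target was allowed to be any Banach lattice, specializes to give a canonical retraction whenever $E$ itself carries a compatible lattice structure. If one wishes to additionally underline that $P$ acts as evaluation, one can combine this with Lemma \ref{latticehomo}: composing $P$ with any lattice homomorphism $\psi : E \to \mathbb R$, which by the Kakutani representation (or just directly) can be written as evaluation at some $x^* \in E^*$, gives back the evaluation functional $f \mapsto f(x^*)$ on $FBL[E]$, confirming consistency with the description of lattice homomorphisms on $FBL[E]$.
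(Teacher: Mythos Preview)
Your argument is correct: applying the universal property of $FBL[E]$ to $\mathrm{id}_E:E\to E$ (which is legitimate precisely because $E$ is itself a Banach lattice) immediately yields a norm-one lattice homomorphism $P$ with $P\circ\phi_E=\mathrm{id}_E$, i.e.\ a lattice projection onto $E$. The paper's own ``proof'' is simply a citation to \cite[Corollary 2.5]{ART}, where exactly this argument is carried out, so your approach coincides with the intended one; the final paragraph about Lemma~\ref{latticehomo} is harmless but unnecessary.
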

\begin{proof}
\cite[Corollary 2.5]{ART}.
\end{proof}

\begin{lem}\label{compact}
Let $p,q\in[1,\infty)$ with $p>q$. Every lattice homomorphism $T:FBL[\ell_p]\rightarrow \ell_q$ is compact.
\end{lem}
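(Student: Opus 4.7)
The plan is to combine Pitt's theorem with the universal property of $FBL[\ell_p]$ via a tail-projection argument on $\ell_q$. First, consider the restriction $S := T \circ \phi_{\ell_p} : \ell_p \to \ell_q$, which is a bounded linear operator with $\|S\|\leq \|T\|$. Since $p > q$, Pitt's theorem gives that $S$ is compact.

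For each $N \in \mathbb{N}$, let $P_N : \ell_q \to \ell_q$ denote the band projection onto the coordinates indexed by $\{n > N\}$; as a band projection in $\ell_q$, it is a lattice homomorphism. Hence $T_N := P_N \circ T : FBL[\ell_p] \to \ell_q$ is also a lattice homomorphism, and on the generators it satisfies $T_N \circ \phi_{\ell_p} = P_N \circ S =: S_N$. By the uniqueness clause in the universal property of $FBL[\ell_p]$, $T_N$ must coincide with the canonical lattice-homomorphism extension of $S_N$, so that $\|T_N\| = \|S_N\|$. This identity is really the crux of the argument, and the main (conceptual) obstacle: it is the only place where the lattice structure on $FBL[\ell_p]$ beyond the linear embedding of $\ell_p$ enters in an essential way. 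Without it, one could only invoke the trivial bound $\|T_N\|\leq \|T\|$, which is useless here.

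Compactness of $S$ means that $\overline{S(B_{\ell_p})}$ is relatively compact in $\ell_q$, so the tail projections $P_N$ go to zero uniformly on this set (pointwise convergence to zero together with equicontinuity on compacta). Therefore $\|S_N\|\to 0$, and hence $\|T_N\|\to 0$. On the other hand, $T - T_N = (I-P_N)\circ T$ takes values in the finite-dimensional subspace $\mathrm{span}\{e_1,\dots,e_N\}\subset \ell_q$, so it is of finite rank. Since $\|T-(T-T_N)\|=\|T_N\|\to 0$, $T$ is a norm-limit of finite-rank operators and is therefore compact.
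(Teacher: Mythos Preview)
Your proof is correct and follows essentially the same route as the paper: restrict to $\ell_p$ via $\phi_{\ell_p}$, apply Pitt's theorem, observe that the tail $P_N\circ T$ is a lattice homomorphism so that its norm equals that of its restriction $P_N\circ S$ by the universal property, and conclude that $T$ is a norm-limit of finite-rank operators. The only cosmetic difference is that the paper first identifies each coordinate $e_i^*\circ T$ as a point evaluation $f\mapsto f(x_i^*)$ (via the characterization of real-valued lattice homomorphisms on $FBL[\ell_p]$), whereas you bypass this step by noting directly that a band projection composed with a lattice homomorphism is again a lattice homomorphism.
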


\begin{proof}
Let $T:FBL[\ell_p]\rightarrow \ell_q$ be a lattice homomorphism. For $i\in \mathbb N$, let $e_i^*\in \ell_q^*$ denote the biorthogonal functionals corresponding to the unit vector basis of $\ell_q$. That is, for scalars $(a_j)_{j\in\mathbb N}$ 
$$
e_i^*(\sum_{j\in\mathbb N} a_j e_j)=a_i.
$$
Note that for every $i\in\mathbb N$, $e_i^*\circ T:FBL[\ell_p]\rightarrow \mathbb R$ is a lattice homomorphism. Hence, by Lemma \ref{latticehomo}, there exist $x_i^*\in \ell_p^*$ such that for every $f\in FBL[\ell_p]$
$$
e_i^*(T(f))=f(x_i^*).
$$ 
Now, for $n\in\mathbb N$, let $F_n:FBL[\ell_p]\rightarrow \ell_q$ be the finite rank operator given by
$$
F_n (f)=\sum_{i=0}^n f(x_i^*)e_i.
$$
Note that for $f\in FBL[\ell_p]$, we have that 
$$
[T-F_n](f)=\sum_{i>n} f(x_i^*)e_i.
$$ 
Therefore, $T-F_n$ is a lattice homomorphism. Also note that by Pitt's theorem \cite[Proposition 2.c.3]{LT1}, $T\circ \phi_{\ell_p}:\ell_p\rightarrow \ell_q$ is compact. Hence, 
$$
\|(T-F_n)\circ \phi_{\ell_p}\|=\|T\circ \phi_{\ell_p}-F_n\circ \phi_{\ell_p}\|\underset{n\rightarrow \infty}\longrightarrow0.
$$
By construction of $FBL[\ell_p]$, it follows that $\|T-F_n\|=\|(T-F_n)\circ \phi_{\ell_p}\|\rightarrow0$. Thus, $T$ is compact as claimed.
\end{proof}

\begin{proof}[Proof of Theorem \ref{pqdif}]
Without loss of generality suppose that $p>q$ and $$T:FBL[\ell_p]\rightarrow FBL[\ell_q]$$ is a lattice isomorphism. Let $$P:FBL[\ell_q]\rightarrow \ell_q$$ be the lattice homomorphism projection given by Lemma \ref{projection}, and consider the lattice homomorphism $$PT:FBL[\ell_p]\rightarrow \ell_q,$$ which is in particular invertible on a subspace isomorphic to $\ell_q$. This is a contradiction with Lemma \ref{compact}.
\end{proof}


\begin{thebibliography}{99}


\bibitem{AlbKal} F. Albiac, N. J. Kalton, Topics in Banach space theory, Graduate Texts in Mathematics 233, Springer 2006.
\bibitem{AB} C.~D. Aliprantis and O.~Burkinshaw, \emph{Positive operators}, Springer, Dordrecht, 2006, Reprint of the 1985 original.
\bibitem{AGLRT}  A. Avilés, A. J. Guirao, S. Lajara, J. Rodríguez, P. Tradacete, Weakly compactly generated Banach lattices. Studia Math. 234 (2016), no. 2, 165--183.
\bibitem{ART} A. Avilés, J. Rodríguez, P. Tradacete, The free Banach lattice generated by a Banach space. J. Funct. Anal. 274 (2018), no. 10, 2955--2977.
\bibitem{Baker} K. A. Baker, Free vector lattices. Canad. J. Math. 20 1968 58--66.
\bibitem{Bleier} R. D. Bleier, Free vector lattices. Trans. Amer. Math. Soc. 176 (1973), 73--87.
\bibitem{dePW} B. de Pagter, A. W. Wickstead, Free and projective Banach lattices. Proc. Roy. Soc. Edinburgh Sect. A 145 (2015), no. 1, 105--143.
\bibitem{DJT} J. Diestel, H. Jarchow, A. Tonge, Andrew Absolutely summing operators. Cambridge Studies in Advanced Mathematics, 43. Cambridge University Press, Cambridge, 1995.
\bibitem{LT1} J. Lindenstrauss and L. Tzafriri. Classical Banach spaces I, Springer-Verlag, Berlin, 1977.
\bibitem{LT2} J. Lindenstrauss and L. Tzafriri. Classical Banach spaces II, Springer-Verlag, Berlin, 1977.
\bibitem{MN} P. Meyer-Nieberg, Banach lattices. Universitext. Springer-Verlag, Berlin, 1991.


\end{thebibliography}
\end{document}